\documentclass[a4paper,11pt]{amsart}
\usepackage[utf8]{inputenc}
\usepackage{amsmath}
\usepackage{amsfonts}
\usepackage{amsthm}
\usepackage{amssymb}
\usepackage{color}
\usepackage{tikz}
\usepackage{todonotes}

\newcommand{\set}[1]{\left\{ #1 \right\}}

\newcommand{\supp}[1]{\mathrm{Supp}(#1)}

\newcommand{\ex}[1]{\exp \left( #1 \right)}
\newcommand{\norm}[1] {\left\| #1 \right\|}
\newcommand{\h}{\mathcal{H}}
\newcommand{\dfn}{\mathrel{\mathop:}=}

\theoremstyle{plain}
\newtheorem{thm}{Theorem}[section]
\newtheorem{lem}[thm]{Lemma}

\newtheorem{prop}[thm]{Proposition}
\newtheorem{cor}[thm]{Corollary}

\newtheorem{con}[thm]{Convention}

\theoremstyle{definition}
\newtheorem{defn}[thm]{Definition}
\newtheorem{exmp}[thm]{Example}

\theoremstyle{remark}
\newtheorem{rem}[thm]{Remark}
\begin{document}
\title[Equivariant compression through subgroups]{Equivariant compression of certain direct limit groups and amalgamated free products}

\address{School of Mathematics,
University of Southampton, Highfield, Southampton, SO17 1BJ, United Kingdom.}
\author{Chris Cave}
\email[Chris Cave]{chriscave89@gmail.com}

\address{School of Mathematics,
University of Southampton, Highfield, Southampton, SO17 1BJ, United Kingdom.}
\author{Dennis Dreesen}
\email[Dennis Dreesen]{Dennis.Dreesen@soton.ac.uk}
\renewcommand{\thefootnote}{\fnsymbol{footnote}} 
\footnotetext{MSC2010: 20F65 (geometric group theory), 22D10 (Unitary representations of locally compact groups)}
\footnotetext{The first author is sponsored by the EPSRC, grant number EP/I016945/1. The second author is a Marie Curie Intra-European Fellow within the 7th European Community Framework Programme.}
\renewcommand{\thefootnote}{\arabic{footnote}} 
\begin{abstract}
We give a means of estimating the equivariant compression of a group $G$ in terms of properties of open subgroups $G_i\subset G$ whose direct limit is $G$. Quantifying a result by Gal, we also study the behaviour of the equivariant compression under amalgamated free products $G_1*_HG_2$ where $H$ is of finite index in both $G_1$ and $G_2$.
\end{abstract}

\maketitle
\section{Introduction}
The Haagerup property, which is a strong converse of Kazhdan's property (T), has translations
and applications in various fields of mathematics such as representation theory, harmonic
analysis, operator K-theory and so on. It implies the Baum--Connes conjecture and related Novikov conjecture \cite{CCJJV01}. We use the following definition of the Haagerup property.
\begin{defn}
A locally compact second countable group $G$ is said to satisfy the {\bf Haagerup property} if it admits a continuous proper affine isometric action $\alpha$ on some Hilbert space $\h$. Here, proper means that for every $M>0$, there exists a compact set $K\subset G$ such that $\|\alpha(g)(0)\|\geq M$ whenever $g\in G\setminus K$. We say that the action is continuous if the associated map $G\times \h \rightarrow \h, (g,v)\mapsto \alpha(g)(v)$ is jointly continuous.
\begin{con}
 Throughout this paper, all actions are assumed continuous and all groups will be second countable and locally compact.
\end{con}
\end{defn}
Recall that any affine isometric action $\alpha$ can be written as $\pi+b$ where $\pi$ is a unitary representation of $G$ and where $b:G \rightarrow \h$, $g\mapsto \alpha(g)(0)$ satisfies
\begin{equation}
\forall g,h \in G: \ b(gh)=\pi(g)b(h)+b(g).
\end{equation}
In other words, $b$ is a $1$-cocycle associated to $\pi$.

In \cite{GK02}, the authors define {\em compression} as a means to quantify {\em how strongly} a finitely generated group satisfies the Haagerup property. More generally, assume that $G$ is a compactly generated group. Denote by $S$ some compact generating subset and equip $G$ with the word length metric relative to $S$. Using the triangle inequality, one checks easily that any $1$-cocycle $b$ associated to a unitary action of $G$ on a Hilbert space is Lipschitz. On the other hand, one can look for the supremum of $r\in [0,1]$ such that there exists $C,D>0$ with
\[ \forall g\in G:\ \frac{1}{C} |g|^r-D \leq \| b(g) \| \leq C|g|+D. \]
\begin{defn}
The above supremum, denoted $R(b)$, is called the compression of $b$ and taking the supremum over all proper affine isometric actions of $G$ on all Hilbert spaces leads to the {\bf equivariant Hilbert space compression} of $G$, denoted $\alpha_2^\#(G)$. 
Suppose now that $G$ is no longer compactly generated but still has a proper length function. Then define $\alpha_2^{\#}(G)$ to be the supremum of $R(b)$ but over all \emph{large-scale Lipschitz} 1-cocycles.
\end{defn}

The equivariant Hilbert space compression contains information on the group. First of all, if $\alpha_2^\#(G)>0$, then $G$ is Haagerup. The converse was disproved by T. Austin in \cite{Austin2011}, where the author proves the existence of finitely generated amenable 
groups with equivariant compression $0$. Further, it was shown in \cite{GK02} that if for a finitely generated group $\alpha_2^\#(G)>1/2$, then $G$ is amenable. This result was generalized to compactly generated groups in \cite{dCTV07} and it provides some sort of converse 
for the well-known fact that amenability implies the Haagerup property. Much effort has been done to calculate the explicit equivariant compression value of several groups and classes of groups, see e.g. \cite{Stalder-Valette}, \cite{AGS06}, \cite{Gal2004}, 
\cite{T11}, \cite{ANP}.

Given two finitely generated group $G$ and $H$ the group $\bigoplus_H G$ is no longer finitely generated. However we can view $\bigoplus_H G$ as a subspace of $G \wr H$ and so equip $\bigoplus_H G$ with a natural proper metric. In this article we are motivated by comparing the compression of $\bigoplus_H G$ with $G \wr H$.  We assume that a given group $G$, equipped with a proper length function $l$, can be viewed as a direct limit of open (hence closed) subgroups $G_1\subset G_2 \subset G_3 \subset \ldots \subset G$. We equip each $G_i$ with the subspace metric from $G$. Our main 
objective will be to find bounds on $\alpha_2^\#(G)$ in terms of properties of the $G_i$. Note that, as each $G_i$ is a metric subspace of $G$, we have $\alpha_2^\#(G)\leq \inf_{i\in \mathbb{N}}\alpha_2^\#(G_i)$. The main challenge is to find  a sensible lower bound on $\alpha_2^\#(G)$. The key property that we introduce is the $(\alpha,l,q)$ polynomial property, which we shorten to $(\alpha,l,q)$-PP (see Definition \ref{intro:polp} below). 
 Precisely, we obtain the following result.
  \begin{thm}
  Let $G$ be a locally compact, second countable group equipped with a proper length function $l$. Suppose there exists a sequence of open subgroups $(G_i)_{i \in \mathbb{N}}$, each equipped with the restriction of $l$ to $G_i$, such that $\varinjlim G_i = G$ and $\alpha = \inf \{ \alpha_2^{\#}(G_i) \} > 0$. If $(G_i)_{i \in \mathbb{N}}$ has $(\alpha, l,q)$-PP, then there are the following two cases:
  \begin{equation*}
  l \geq q \Rightarrow \alpha_2^{\#}(G) \geq \frac{\alpha}{2l + 1}
 \end{equation*}
or,
\begin{equation*}
 l \leq q \Rightarrow \alpha_2^{\#}(G) \geq \frac{\alpha}{l + q +1}.
\end{equation*}
\label{thm:main}
\label{lower bound for compression of direct limits of groups}
 \end{thm}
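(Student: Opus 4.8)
The plan is to build, out of the family $(b_i)$, a single large-scale Lipschitz $1$-cocycle $b$ on $G$ (or a sequence of such cocycles with $R(b)$ approaching the asserted bound), and then read off $\alpha_2^{\#}(G)$ from the definition. Since each $G_i$ is open, $G$ permutes the coset space $G/G_i$, and I will use this action to transport $b_i$ to a cocycle $\widetilde b_i\colon G\to\widetilde{\h}_i$ for the representation built from $\pi_i$ over the cosets. The candidate cocycle is then the weighted Hilbertian direct sum $b=\bigoplus_i c_i\,\widetilde b_i$ on $\h=\bigoplus_i\widetilde{\h}_i$, where the weights $c_i=i^{-\gamma}$ form a free parameter to be fixed at the end. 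Everything reduces to two estimates: an upper bound forcing $b$ to be large-scale Lipschitz, i.e. $\|b(g)\|^2\lesssim|g|^2$, and a lower bound $\|b(g)\|^2\gtrsim|g|^{2r}$; the game is to choose $\gamma$, together with an optimal coset level $i$ for each $g$, so as to make $r$ as large as possible.

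For the upper estimate I expect to bound $\|\widetilde b_i(g)\|^2$ by the $b_i$-values accumulated along the cosets of $G_i$ that a geodesic for $g$ visits, controlled through $\|b_i(h)\|^2\le A_i|h|^2+B_i$ together with $A_i\le Ci^l$ and $B_i\le Di^q$. The series $\sum_i c_i^2\|\widetilde b_i(g)\|^2$ then converges to a quantity $\lesssim|g|^2$ exactly when the weights dominate the larger of the two polynomial rates, namely when $2\gamma>\max(l,q)+1$; accordingly I take $\gamma$ just above $\tfrac{1}{2}(\max(l,q)+1)$. The normalization $B(1,i)\subset G_i$, together with properness and uniform discreteness of $l$, is what keeps both the number of visited cosets and the per-coset lengths under control, so that the sum is finite uniformly in $g$.

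For the lower estimate, fix $g$ with $|g|=t$ and select a level $i$. The level-$i$ term carries the compression signal $\tfrac{1}{A_i}(\cdot)^{2\alpha-\eta_i}$ but is taxed by the additive error $B_i$, so it is only useful once the relevant length overtakes $B_i$; feeding a length of order $t$ through a level $i$ of order $t^{\theta}$ leaves $\theta$ as a free scale parameter. Writing the resulting lower exponent as $r=r(\theta,\gamma)$ and maximizing over the admissible range of $\theta$ after substituting $\gamma\approx\tfrac{1}{2}(\max(l,q)+1)$ should produce $r=\tfrac{\alpha}{\,l+\max(l,q)+1\,}$, which equals $\tfrac{\alpha}{2l+1}$ when $l\ge q$ and $\tfrac{\alpha}{l+q+1}$ when $l\le q$; the two regimes are precisely those in which $A_i$, respectively $B_i$, is the dominant error term. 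Letting $\eta_i\to0$ and $\gamma$ decrease to its infimal value then pushes $R(b)$ up to this value.

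The main obstacle is the transfer step. For an infinite-index open subgroup the naive induced cocycle need not lie in the Hilbert space at all: already for $G=\bigoplus_{\mathbb N}\mathbb Z$ with $G_i=\mathbb Z^i$ one computes that the induced cocycle has a coset-independent, hence non-square-summable, profile. The crux is therefore to regularize the construction — through the weights $c_i$ and a decomposition of geodesics into coset segments — so that each $\widetilde b_i$ and the full sum $b$ genuinely define an $\ell^2$-valued $1$-cocycle, while retaining enough of the lower bound of $b_i$ after assembly. Verifying that $b$ is large-scale Lipschitz and that the lower bound survives the optimization, uniformly in $g$, is exactly where properness and uniform discreteness of $l$ and the hypothesis $\varinjlim G_i=G$ enter in an essential way.
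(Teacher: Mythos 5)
Your high-level plan (assemble the $b_i$ into one object on $G$, damped by polynomial weights, then trade the polynomial growth of $A_i,B_i$ against the compression exponent) matches the spirit of the paper's proof, and your exponent bookkeeping even lands on the correct bound $\alpha/\left(1+l+\max(l,q)\right)$. But there is a genuine gap at exactly the step you yourself flag as ``the crux'': transferring each cocycle $b_i$ from the \emph{infinite-index} open subgroup $G_i$ to all of $G$. Induction of affine isometric actions over the coset space $G/G_i$ requires the relevant data to be square-summable over infinitely many cosets, and, as your own example $G=\bigoplus_{\mathbb{N}}\mathbb{Z}$, $G_i=\mathbb{Z}^i$ shows, this fails in the situations the theorem is designed for. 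No choice of scalar weights $c_i$ can repair this, because the defect is that an individual $\widetilde b_i(g)$ does not lie in any Hilbert space at all, not that the sum $\sum_i c_i^2\|\widetilde b_i(g)\|^2$ diverges. Likewise, ``regularizing through a decomposition of geodesics into coset segments'' is not a construction: a cocycle must satisfy $b(gh)=\pi(g)b(h)+b(g)$ exactly, and geodesic-based assignments give at best coarse, non-equivariant control. So the proposal is incomplete precisely where the real difficulty sits, and the route chosen (inducing cocycles) is the one that provably breaks down.

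The paper circumvents this by never transferring cocycles at all: it transfers \emph{positive definite functions}. Each $\psi_i=\|b_i\|^2$ is conditionally negative definite on $G_i$ (Proposition \ref{prop:nsiscondneg}); by Schoenberg's theorem $\varphi^i_k=\exp(-\psi_i/k)$ is positive definite on $G_i$; and a positive definite function on an \emph{open} subgroup extends by zero to a positive definite function on the whole group (the cited Hewitt--Ross fact --- this is the only place openness is really used, and it is the step that replaces induction). Then $\overline\psi=\sum_i\bigl(1-\varphi^i_{J(i)}\bigr)$, with the scaling $J(i)=(A_i+B_i)i^{1+p}$ playing the role of your weights $c_i$, is conditionally negative definite on $G$; convergence and the large-scale Lipschitz bound follow from the normalization $B(1,i)\subset G_i$, and Theorem \ref{thm:condnegisns} converts $\overline\psi$ back into a genuine cocycle, after which the counting of indices $i$ with $\varphi^i_{J(i)}(g)\leq 1/2$ yields the lower bound. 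The extension-by-zero step has no analogue at the level of cocycles, which is why your plan cannot be completed as stated; the repair is to route the entire construction through the positive definite / conditionally negative definite picture, as the paper does.
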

 
 We use this result to obtain a lower bound of the compression of the following examples. Let $F \colon [0,1] \times \mathbb{R}^{\geq 0} \to \mathbb{R}$ be the function
\[
 F(\alpha,d) = \begin{cases}
                d(2\alpha -1) &  \mbox{if $2\alpha \geq 1$} \\
                0 & \mbox{otherwise.}
               \end{cases}
\]
\begin{thm}
 Let $G$ and $H$ be finitely generated groups where $H$ has polynomial growth of degree $d \geq 1$. Then
 \[
  \alpha^{\#}_2\left(\bigoplus_H G\right) \geq \frac{\alpha_2^{\#}(G)}{1 + F(\alpha^{\#}_2(G),d)+ 2\alpha^{\#}_2(G)(1+d)}
 \]
 where $\bigoplus_H G$ is equipped with the subspace metric from $G \wr H$.

\end{thm}
Our result also allows to consider spaces $\bigoplus_H G_h$ where $G_h$ actually depends on the parameter $h\in H$. For example, we take a collection of finite groups $F_i$ with $F_0=\{0\}$ and look at $G=\bigoplus_{i\in \mathbb{N}} F_i$. This is the first available lower bound for the equivariant compression of groups of this type.

\begin{thm}
 Let $\set{F_i}_{i \in \mathbb{N}}$ be a collection of finite groups. Equip $G = \bigoplus_{i \in \mathbb{N}} F_i$ with the length function $l(g) = \min \set{n \in \mathbb{N} : g \in \oplus_{i = 0}^n F_i}$. Then $\alpha_2^{\#}(G,l) > 1/3$.
\end{thm}

We give a proof of Theorem \ref{thm:main} in Section \ref{theproof} and apply to these concrete examples in Section \ref{example}. Note that our result can also be viewed as a study of the behaviour of equivariant compression under direct limits. 
The behaviour of the Haagerup property and the equivariant compression under group constructions  has been studied extensively (see e.g. \cite{NP08}, \cite{Dreesen}, Chapter 6 of \cite{CCJJV01}, \cite{CSV12}, \cite{AntolinDreesen}).

In Section \ref{gal}, we quantify part of \cite{Gal2004} to study the behaviour of the equivariant compression under certain amalgamated free products $G_1*_H G_2$ where $H$ is of finite index in both $G_1$ and $G_2$. Suppose $H$ is a closed finite index subgroup inside groups compactly generated groups $G_1$ and $G_2$ and there exists proper affine isometric actions $\beta_i \colon G_i \to \mathrm{Aff}(V_i)$ on Hilbert spaces $V_i$. In \cite{Gal2004}, the author shows that if there exists a non-trivial closed subspace $W \subset V_1 \cap V_2$ that is fixed by the restricted actions $\beta_i |_H$ then the product $G_1 \ast_H G_2$ also admits a proper affine isometric action on a Hilbert space. We quantify this result.
\begin{thm}
 With the above assumptions $\alpha_2^{\#}(G_1 \ast_H G_2) \geq \frac{\alpha_2^{\#}(H)}{2}$
\end{thm}

\section{The equivariant compression of direct limits of groups \label{limit}}
\subsection{\label{pre}Preliminaries and formulation of the main result}
Suppose $G$ is a locally compact second countable group equipped with a proper length function $l$, i.e. closed $l$-balls are compact. Assume that there exists a sequence of open subgroups $G_i\subset G$ such that $\varinjlim G_i = G$, i.e. $G$ is the direct limit of the $G_i$. We equip each $G_i$ with the restriction of $l$ to $G_i$. It will be our goal to find bounds on $\alpha_2^\#(G)$ in terms of the $\alpha_2^\#(G_i)$. 
Clearly, as the $G_i$ are subgroups then an upper bound of the equivariant compression is the infimum of the equivariant compressions of the $G_i$. The challenge is to find a sensible lower bound. The next example will show that it is not enough to only consider the $\alpha_2^\#(G_i)$.

\begin{exmp}
 Consider the wreath product $\mathbb{Z}\wr \mathbb{Z}$ equipped with the standard word metric relative to $\{(\delta_1,0),(0,1)\}$, where $\delta_1$ is the characteristic function of $\{0\}$. Let $\mathbb{Z}^{(\mathbb{Z})} = \set{f \colon \mathbb{Z} \to \mathbb{Z} : \mbox{ $f$ is has finite support}}$ be equipped with the subspace metric from $\mathbb{Z} \wr \mathbb{Z}$. Consider the direct limit of groups
\begin{equation*}
 \mathbb{Z} \hookrightarrow \mathbb{Z}^3 \hookrightarrow \mathbb{Z}^{5} \cdots \hookrightarrow \mathbb{Z}^{(\mathbb{Z})}
\end{equation*}
where $\mathbb{Z}^{2n+1}$ has the subspace metric from $\mathbb{Z}^{(\mathbb{Z})}$. This metric is quasi-isometric to the standard word metric on $\mathbb{Z}^{2n+1}$ and so each term has equivariant compression 1. So $\mathbb{Z}^{(\mathbb{Z})}$ is a direct limit of groups with equivariant compression 1 but by \cite{AGS06} has equivariant compression less than 3/4. On the other hand the sequence
\begin{equation*}
 \mathbb{Z} \to \mathbb{Z} \to \cdots \to \mathbb{Z}
\end{equation*}
is a sequence of groups with equivariant compression 1 and the equivariant compression of the direct limit is 1.
$\qed$
\end{exmp}

Given a sequence of $1$-cocycles $b_i$ of $G_i$, then in order to predict the equivariant compression of the direct limit, it will  be necessary to incorporate more information on the growth behaviour of the $b_i$ than merely the compression exponent $R(b_i)$. The growth behaviour of $1$-cocycles can be completely caught by so called {\em conditionally negative definite functions} on the group (See Proposition \ref{prop:nsiscondneg} and Theorem \ref{thm:condnegisns} below).

\begin{defn}
A continuous map $\psi:G\rightarrow \mathbb{R}^+$ is called {\em conditionally negative definite} if $\psi(g)=\psi(g^{-1})$ for every $g\in G$ and if for all $n\in \mathbb{N}, \ \forall g_1,g_2,\ldots ,g_n \in G$ and all $a_1,a_2,\ldots, a_n\in \mathbb{R}$ with $\sum_{i=1}^n a_i=0$,  we have
\[ \sum_{i,j} a_i a_j \psi(g_i^{-1}g_j)\leq 0 .\]
\end{defn}
\begin{prop}[Example 13, page 62 of \cite{Harpe1989}]
 Let $\mathcal{H}$ be a Hilbert space and $b \colon G \to \mathcal{H}$ a 1-cocycle associated to a unitary representation. Then the map $\psi \colon G \to \mathbb{R}$, $g \mapsto \norm{b(g)}^2$ is a conditionally negative definite function on $G$. 
\label{prop:nsiscondneg}
\end{prop}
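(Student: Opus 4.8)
The plan is to reduce the defining quadratic-form inequality directly to the nonnegativity of a Hilbert-space norm, using only the cocycle relation and unitarity of $\pi$; the symmetry and continuity of $\psi$ come essentially for free, so the real content is the negative-definiteness estimate.

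First I would record two elementary consequences of the cocycle relation $b(gh)=\pi(g)b(h)+b(g)$. Setting $g=h=1$ gives $b(1)=2b(1)$, hence $b(1)=0$; then $0=b(gg^{-1})=\pi(g)b(g^{-1})+b(g)$ yields $b(g^{-1})=-\pi(g^{-1})b(g)$. Since $\pi(g^{-1})$ is unitary this forces $\norm{b(g^{-1})}=\norm{b(g)}$, i.e. $\psi(g^{-1})=\psi(g)$. Continuity of $\psi$ follows from continuity of $b$, which is guaranteed by the standing assumption that the action is continuous.

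The key identity combines the two facts above: for $g,h\in G$,
\[ b(g^{-1}h)=\pi(g^{-1})b(h)+b(g^{-1})=\pi(g^{-1})\bigl(b(h)-b(g)\bigr), \]
so that, again by unitarity,
\[ \psi(g^{-1}h)=\norm{b(g^{-1}h)}^2=\norm{b(h)-b(g)}^2 . \]
This rewrites $\psi$ on each product $g_i^{-1}g_j$ as the squared distance between $b(g_i)$ and $b(g_j)$ inside $\h$, which is exactly the form in which negative-definiteness becomes transparent.

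Finally I would expand. Given $g_1,\dots,g_n\in G$ and real $a_1,\dots,a_n$ with $\sum_i a_i=0$, I write $\norm{b(g_j)-b(g_i)}^2=\norm{b(g_i)}^2+\norm{b(g_j)}^2-2\,\mathrm{Re}\,\inner{b(g_i)}{b(g_j)}$ and sum against $a_ia_j$. The two square-norm terms each factor through $\sum_j a_j=0$ and therefore vanish, while, because the scalars $a_ia_j$ are real, the cross term collapses to
\[ -2\,\mathrm{Re}\sum_{i,j}a_ia_j\inner{b(g_i)}{b(g_j)}=-2\,\norm{\sum_i a_i b(g_i)}^2\le 0 . \]
Hence $\sum_{i,j}a_ia_j\psi(g_i^{-1}g_j)\le 0$, as required. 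There is no genuine obstacle here: the only step needing a little care is the bookkeeping with the cocycle relation leading to $\psi(g^{-1}h)=\norm{b(h)-b(g)}^2$, after which the hypothesis $\sum_i a_i=0$ does all the work.
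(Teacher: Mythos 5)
Your proof is correct, and it is the standard argument: the paper itself gives no proof of this proposition (it is quoted from Example 13, p.~62 of de la Harpe--Valette), and your reduction via $b(g^{-1}h)=\pi(g^{-1})(b(h)-b(g))$, hence $\psi(g_i^{-1}g_j)=\norm{b(g_j)-b(g_i)}^2$, followed by expanding the quadratic form against $\sum_i a_i=0$, is exactly the classical argument that the cited reference contains.
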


\begin{thm}[Proposition 14, page 63 of \cite{Harpe1989}]
 Let $\psi \colon G \to \mathbb{R}$ be a conditionally negative definite function on a group $G$. Then there exists an affine isometric action $\alpha$ on a Hilbert space $\mathcal{H}$ such that the associated 1-cocycle satisfies $\psi(g) = \norm{b(g)}^2$. \label{thm:condnegisns}
\end{thm}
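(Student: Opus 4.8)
The plan is to carry out the Gelfand--Naimark--Segal construction in the form adapted to conditionally negative definite functions; this is the standard converse to Proposition \ref{prop:nsiscondneg}. I first record that $\psi(e)=0$: applying the defining inequality with $n=2$, $g_1=e$, $g_2=g$, $a_1=1$, $a_2=-1$ gives $\psi(g)\geq\psi(e)$ for all $g$, and since any $1$-cocycle satisfies $b(e)=0$ the normalisation $\psi(e)=0$ is forced by the desired conclusion; I therefore take it as part of the standing hypotheses, as is standard.

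Let $\mathbb{R}[G]$ denote the space of finitely supported functions $\xi\colon G\to\mathbb{R}$, and let $V_0=\set{\xi\in\mathbb{R}[G]:\sum_{g}\xi(g)=0}$ be the augmentation ideal. On $V_0$ I would define the symmetric bilinear form $B(\xi,\eta)=-\tfrac12\sum_{g,h}\xi(g)\eta(h)\psi(g^{-1}h)$. The key point is that $B$ is positive semidefinite: for $\xi\in V_0$ the coefficients $a_g=\xi(g)$ sum to $0$, so $B(\xi,\xi)=-\tfrac12\sum_{g,h}a_ga_h\psi(g^{-1}h)\geq 0$ is exactly the conditional negativity hypothesis. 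Quotienting by the null space $\set{\xi:B(\xi,\xi)=0}$ and completing yields a real Hilbert space $\h$. Setting $b(g)$ to be the image of $\delta_g-\delta_e\in V_0$ in $\h$, a direct expansion of $B(\delta_g-\delta_e,\delta_g-\delta_e)$ using $\psi(g)=\psi(g^{-1})$ gives $\norm{b(g)}^2=\psi(g)-\psi(e)=\psi(g)$.

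Next I would produce the representation from the left translation action $(\lambda(s)\xi)(x)=\xi(s^{-1}x)$, which preserves both $V_0$ and the form $B$ (by a change of variables in the double sum) and hence descends to a unitary representation $\pi$ of $G$ on $\h$. Since $\lambda(g)\delta_h=\delta_{gh}$, one checks $\pi(g)b(h)+b(g)=(\delta_{gh}-\delta_g)+(\delta_g-\delta_e)=\delta_{gh}-\delta_e=b(gh)$, so $b$ is a $1$-cocycle for $\pi$ and $\alpha(g)v:=\pi(g)v+b(g)$ is the sought affine isometric action, with $\norm{\alpha(g)(0)}^2=\norm{b(g)}^2=\psi(g)$.

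The main obstacle is continuity: the Convention requires $\alpha$ to be a jointly continuous action, and this must be extracted from the continuity of $\psi$ alone. Here I would argue that $b$ is continuous because $\norm{b(g)-b(h)}^2$ can be rewritten, via the cocycle relation and the identity $\inner{b(g)}{b(h)}=\tfrac12(\psi(g)+\psi(h)-\psi(g^{-1}h))$, as a continuous expression in $\psi$; strong continuity of $\pi$ then follows on the dense subspace spanned by the vectors $b(g)$ by a standard approximation argument, and joint continuity of $(g,v)\mapsto\pi(g)v+b(g)$ is deduced from strong continuity of $\pi$ together with continuity of $b$. Verifying these continuity statements carefully, rather than the purely algebraic cocycle computation, is the delicate part of the proof.
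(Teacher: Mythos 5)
Your argument is correct and coincides with the proof of Proposition 14, p.~63 of \cite{Harpe1989}, which the paper cites rather than reproves: the GNS-type construction of the kernel $-\tfrac12\psi(g^{-1}h)$ on the augmentation ideal of $\mathbb{R}[G]$, the cocycle $b(g)=[\delta_g-\delta_e]$ for the translation representation, and the identity $\|b(g)-b(h)\|^2=\psi(g^{-1}h)$ that yields the required continuity. Your preliminary observation is also well taken: the paper's definition of conditionally negative definite omits the normalisation $\psi(e)=0$, without which the construction only gives $\|b(g)\|^2=\psi(g)-\psi(e)$, so adopting $\psi(e)=0$ as a standing hypothesis is exactly how the cited source states the result.
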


These two results imply that we can pass between conditionally negative definite functions and 1-cocycles associated to unitary actions. 

\begin{defn} \label{intro:polp}
  Let $G$ be a group equipped with a proper length function $l$ and suppose that $(G_i)_{i \in \mathbb{N}}$ is a normalized nested sequence of open subgroups such that $\varinjlim G_i = G$. Assume that $\alpha:= \inf_{i\in \mathbb{N}}\alpha_2^\#(G_i) \in (0,1]$ and $l,q \geq 0$.  The sequence $(G_i)_i$ has the \emph{$(\alpha, l,q)$-polynomial property} ($(\alpha, l,q)$-PP) if there exists:
  \begin{enumerate}
   \item a sequence $(\eta_i)_i\subset \mathbb{R}^+$ converging to $0$ such that $\eta_i<\alpha$ for each $i\in \mathbb{N}$,
   \item $(A_i, B_i)_{i \in \mathbb{N}} \subset \mathbb{R}^{>0} \times \mathbb{R}^{\geq 0}$,
   \item a sequence of $1$-cocycles $(b_i \colon G_i \to \mathcal{H}_i)_{i \in \mathbb{N}}$, where each $b_i$ is associated to a unitary action $\pi_i$ of $G_i$ on a Hilbert space $\mathcal{H}_i$
   \end{enumerate}
such that 
  \begin{equation*}
   \frac{1}{A_i}|g|^{2\alpha - \eta_i} - B_i \leq \|b_i(g)\|^2 \leq A_i |g|^2 + B_i \quad \forall g \in G_i, \forall i \in \mathbb{N}
  \end{equation*}
  and there is $C,D>0$ such that $A_i\leq Ci^l$, $B_i \leq Di^q$ for all $i \in \mathbb{N}$.
 \end{defn}
Note that the only real restrictions are the inequalities  $A_i\leq Ci^l$, $B_i \leq Di^q$: we exclude sequences $A_i,B_i$ that grow faster than any polynomial.  The intuition is that equivariant compression is a polynomial property (this follows immediately from its definition), so that sequences $A_i,B_i$ growing faster than any polynomial would be too dominant and one would lose all hope of obtaining a lower bound on $\alpha_2^\#(G)$. On the other hand, if the $A_i$ and $B_i$ grow polynomially, then one can use compression to somehow compensate for this growth. One then obtains a strictly positive lower bound on $\alpha_2^\#(G)$ which may decrease depending on how big $l$ and $q$ are.
We have the following useful characterisation of $(\alpha,l,q)$-polynomial property.
\begin{lem}
 Let $G$ be a locally compact second countable group and $l$ is a proper length metric. Suppose there exists a sequence of open subgroups $(G_i)_{i \in \mathbb{N}}$ such that $\varinjlim G_i = G$. If each $G_i$ are equipped with the restricted length metric from $G$ then $(G_i)_{i \in \mathbb{N}}$ has the $(\alpha,l,q)$-polynomical property if and only if there exists $C,D > 0$ such that for all $\varepsilon > 0$ there exists
 \begin{enumerate}
  \item a sequence $(A_i,B_i)_{i \in \mathbb{N}} \subset \mathbb{R}^{>0} \times \mathbb{R}^{\geq 0}$ such that $A_i \leq Ci^l$ and $B_i \leq Di^q$;
  \item a sequence of 1-cocycles $(b_i \colon G_i \to \h_i)_{i \in \mathbb{N}}$
 \end{enumerate}
such that
\[
 \frac{1}{A_i}|g|^{2 \alpha - \varepsilon} - B_i \leq \norm{b_i(g)}^2 \leq A_i|g|^2 +B_i \quad \forall g \in G_i, \forall i \in \mathbb{N}
\]
\end{lem}

\begin{proof} The ``if'' direction is obvious. For the ``only if'' direction fix $\varepsilon > 0$ and suppose $(G_i)_{i \in \mathbb{N}}$ has the $(\alpha,l,q)$-polynomial property with respect to  sequences $(\eta_i)_{i \in \mathbb{N}}$ and $(b_i \colon G_i \to \mathcal{H}_i)_{i \in \mathbb{N}}$. Choose $N \in \mathbb{N}$ large enough so that $\eta_k < \varepsilon$ for all $k \geq N$. Thus $b_k \colon G_k \to \mathcal{H}_k$ satisfies the above conditions for all $k \geq N$. For $k \leq N$ we take the restriction of $b_N$ to $G_k$ to obtain the sequence satisfying the above conditions for all $k \in \mathbb{N}$.
\end{proof}

\begin{prop}
 Let $G$ be a locally compact second countable group and suppose there exists a sequence of open subgroups $(G_i)_{i \in \mathbb{N}}$ such that $\varinjlim G_i = G$. If $\alpha \dfn \alpha^{\#}_2(G) > 0$ then $(G_i)_{i \in \mathbb{N}}$ has $(\alpha,0,0)$-polynomical property.
\end{prop}

\begin{proof}
 For all $0 < \varepsilon < \alpha$ there exists a 1-cocycle $b$ such that
 \[
  \frac{1}{A}|g|^{\alpha - \varepsilon} - B \leq \norm{b(g)} \quad \forall g \in G
 \]
The restriction of $b$ to each $G_i$ is a 1-cocycle and gives $(G_i)_{i \in \mathbb{N}}$ the $(\alpha,0,0)$-polynomial property.
\end{proof}
Combining this with Theorem \ref{thm:main} we have the following consequence which confirms our intuition.
\begin{cor}
 Let $G$ be a locally compact second countable group with a proper length function $l$. If there exists a sequence of open subgroups $(G_i)_{i \in \mathbb{N}}$ such that $\varinjlim G_i = G$ then $(G_i)_{i \in \mathbb{N}}$ has the $(\alpha,l,q)$-polynomial property for some $\alpha \in (0,1]$ and $l,q \geq 0$ if and only if $\alpha_{2}^{\#}(G) > 0$
\end{cor}

\subsection{\label{theproof} The proof of Theorem \ref{thm:main} }

 \begin{proof}[Proof of Theorem \ref{thm:main}]
 First we can assume that $l$ is uniformly discrete. That is there exists a $c > 0$ such that $l(x) > c$ for all $x \in G \setminus \set{e}$. This is because given a length function $l$ one can define a new length function $l'$ such that $l'(x) = 1$ whenever $0< l(x) \leq 1$ and $l'(x) = l(x)$ when $l(x) \geq 1$. Hence $l'$ will be quasi-isometric to $l$ and so will not change the compression of $G$ or $G_i$.
 
  Take sequences $(\psi_i \colon G_i \to \mathbb{R})_{i \in \mathbb{N}}$, $(\eta_i)_i$ and $(A,B) = (A_i, B_i)_{i \in \mathbb{N}} \subset \mathbb{R}^{>0} \times \mathbb{R}^{\geq 0}$ satisfying the conditions of $(\alpha,l,q)$-PP (see Definition \ref{intro:polp}). We assume here, without loss of generality, that the sequences $(A_i)_i,(B_i)_i$ are non-decreasing.
	
  For each $G_i$, define a sequence of maps $(\varphi^i_k \colon G_i \to \mathbb{R})_{k \in \mathbb{N}}$ by
  \begin{equation*}
   \varphi^i_k (g) = \begin{cases} \exp \left( \frac{-\psi_i (g)}{k}\right) & \mbox{if $g \in G_i$}\\
   0 & \mbox{otherwise}.                      
                     \end{cases}
  \end{equation*}
	Note that each $\varphi_k^i$ is continuous as $G_i$ is open and also closed, being the complement of $\cup_{g\notin G_i} gG_i$.
  By $(\alpha,l,q)$-PP, for all $i,k \in \mathbb{N}$, we have 
  \begin{multline*}
   \ex{\frac{-A_i |g|^2 - B_i}{k}} \leq \varphi_k^i(g)  \quad \forall g \in G_i, \mbox{ and } \\ \varphi_k^i(g) \leq \ex{\frac{-|g|^{2\alpha- \eta_i} + A_iB_i}{A_ik}} \quad \forall g \in G. 
  \end{multline*}
Fix some $p>0$, set $J(i) = (A_{i}+B_i)i^{1+p}$ and define $\overline{\psi} \colon G \to \mathbb{R}$ by
  \begin{equation*}
   \overline{\psi}(g) = \sum_{i \in \mathbb{N}} 1 - \Phi_i(g),
  \end{equation*}
	 where $\Phi_i(g):=\varphi_{J(i)}^{i} (g)$.
 To check that $\overline{\psi}$ is well defined, choose any $g\in G$ and note that for $i>\lvert g \rvert$, we have $g\in G_{i}$ and so $\varphi_k^{i}(g)\geq \exp(\frac{-A_{i} |g|^2 - B_{i}}{k})$. Hence
 \begin{align*}
 \sum_{i > |g|} 1 - \Phi_i(g) &\leq \sum_{i>\lvert g \rvert} 1 -\exp\left(\frac{-A_i\lvert g \rvert^2-B_i}{(A_i+B_i)i^{1+p}}\right) \\
 &\leq \sum_{i>\lvert g \rvert} 1-\exp \left(\frac{-\lvert g \rvert^2}{i^{1+p}}\right) \\
 & \leq \sum_{i>\lvert g \rvert} \frac{\lvert g \rvert^2}{i^{1+p}}=\lvert g \rvert^2 \sum_{i>\lvert g \rvert} \frac{1}{i^{1+p}}
 \end{align*}

As \[  \overline{\psi}(g) = \sum_{i=1}^{|g|} 1 - \Phi_i(g) + \sum_{i > |g|} 1 - \Phi_i(g), \]
we see that $\overline{\psi}$ is well defined and that it can be written as a limit of continuous functions converging uniformly over compact sets. Consequently, it is itself continuous. By Schoenberg's theorem (see \cite[Theorem 5.16]{Harpe1989}), all of the maps $\varphi_k^i$ are positive definite on $G_i$ and hence on $G$ (see \cite[Section 32.43(a)]{Hewitt}). In other words,
\[ \forall n\in \mathbb{N}, \ \forall a_1,a_2,\ldots , a_n\in \mathbb{R}, \forall g_1,g_2, \ldots ,g_n\in G: \ \sum_{i,j=1}^n a_i a_j \varphi_k^i(g_i^{-1}g_j)\geq 0.\]
Hence, $\overline{\psi}$ is a conditionally negative definite map. Moreover, using that $l$ is uniformly discrete, we can find a constant $E>0$ such that
\begin{equation}
\overline{\psi}(g) \leq \lvert g \rvert + \lvert g \rvert^2 \sum_{i>\lvert g \rvert} \frac{1}{i^{1+p}} \leq E\lvert g \rvert^2 \label{welldefined}
\end{equation}
so the $1$-cocycle associated to $\overline{\psi}$ via Theorem \ref{thm:condnegisns} is large-scale Lipschitz.

Let us now try to find the compression of this 1-cocycle. Set $VI \colon \mathbb{N} \to \mathbb{R}$ to be the function
\begin{equation*}
 VI(i) = (A_{i} J(i) \ln (2) + A_{i} B_{i})^{\frac{1}{2\alpha - \eta_{i}}}
\end{equation*}
One checks easily that
\begin{equation} 
|g| \geq VI(i) \Rightarrow \Phi_i(g)=\varphi_{J(i)}^{i} (g) \leq \frac{1}{2}.
\label{eq:VI}
\end{equation}
To make the function $VI$ more concrete, let us look at the values of $A_{i}, B_{i}$ and $J(i)$. Recall that by assumption, we have $A_i\leq Ci^l, B_i\leq Di^q$. Hence for $i$ sufficiently large, we have $J(i)\leq (Ci^{l}+D i^{q}) i^{1+p} \leq F i^X$ where $F$ is some constant and $X=1+p+\max(l,q)$. We thus obtain that there is a constant $K>0$ such that for every $i$ sufficiently large (say $i>I$ for some $I\in \mathbb{N}_0$),
\begin{equation*}
 VI(i)\leq K i^{Y/(2\alpha-\eta_i)},
\end{equation*} 
where
\begin{align*}
Y&=\max(X+l,l+q)\\
&= \max(1+p+2l,1+p+l+q)
\end{align*}
As the sequence $\eta_i$ converges to $0$, we can choose any $\delta>0$ and take $I>0$ such that in addition $\eta_i<\delta$ for $i>I$. We then have for all $i>I$ that
\[ VI(i)\leq K i^{Y/(2\alpha-\delta)}.\]
Together with Equation \eqref{eq:VI}, this implies that for $i>I$,
\begin{equation} 
|g| \geq Ki^{Y/(2\alpha-\delta)} \Rightarrow \Phi_i(g)=\varphi_{J(i)}^{i} (g) \leq \frac{1}{2}.
\end{equation}
For every $g\in G$, set
\begin{equation*}
c(g)_{p,\delta} = \sup \set{i \in \mathbb{N} \ | \ Ki^{Y/(2\alpha-\delta)}\leq |g| }. 
\end{equation*}
We then have for every $g\in G$ with $|g|$ large enough, that

\begin{align*}
 \overline{\psi}(g) &\geq  \sum_{i = 1}^{c(g)_{p,\delta}} 1 - \varphi_{J(i)}^{i}(g) \\
&\geq  \sum_{i = I+1}^{c(g)_{p,\delta}} 1/2 = \frac{c(g)_{p,\delta}-I}{2}
\end{align*}
As $c(g)_{p,\delta}\geq (\frac{\lvert g \rvert}{K})^{(2\alpha-\delta)/Y}-1$, we conclude that $R(b)\geq \frac{2\alpha-\delta}{2\max(1+p+2l,1+p+l+q)}$. As this is true for any small $p,\delta>0$, we can take the limit for $p,\delta \to 0$ to obtain $\alpha_2^{\#}(G)\geq \frac{\alpha}{\max(1+2l,1+l+q)}$. Hence, we have the following two cases:
 \begin{equation*}
  l \geq q \Rightarrow \alpha_2^{\#}(G) \geq \frac{\alpha}{1+2l}
 \end{equation*}
or,
\begin{equation*}
 l \leq q  \Rightarrow \alpha_2^{\#}(G) \geq \frac{\alpha}{l + q +1}. \qedhere
\end{equation*}
 \end{proof}

\subsection{\label{example} Examples}
Let $F \colon [0,1] \times \mathbb{R}^{\geq 0} \to \mathbb{R}$ be the function
\[
 F(\alpha,d) = \begin{cases}
                d(2\alpha -1) &  \mbox{if $2\alpha \geq 1$} \\
                0 & \mbox{otherwise.}
               \end{cases}
\]
\begin{thm}\label{thm:polynomial growth}
 Let $G$ and $H$ be finitely generated groups where $H$ has polynomial growth of degree $d \geq 1$. Then
 \[
  \alpha^{\#}_2\left(\bigoplus_H G\right) \geq \frac{\alpha_2^{\#}(G)}{1 + F(\alpha^{\#}_2(G),d)+ 2\alpha^{\#}_2(G)(1+d)}
 \]
 where $\bigoplus_H G$ is equipped with the subspace metric from $G \wr H$.

\end{thm}
\begin{rem}
 Theorem 1.3.~from \cite{Li2010} provides a lower bound to the compression of $G \wr H$. Under the assumptions in Theorem \ref{thm:polynomial growth}, Theorem 1.3.~in \cite{Li2010} gives a lower bound $\alpha_2^{\#}(G \wr H) \geq \alpha_1^{\#}(G)/2$. As this bound is in terms of $L^1$-compression, this makes comparison between between the bound in Theorem \ref{thm:polynomial growth} and \cite[Theorem 1.3.]{Li2010} difficult. However it is known that $\alpha^{\#}_2(G) \leq \alpha^{\#}_1(G) \leq 2 \alpha_2^{\#}(G)$ for all finitely generated groups $G$, see the proof of Theorem 1.1.~and Theorem 1.3.~in \cite{Li2010} and \cite[Lemma 2.3.]{NP08}.
 
 We use this to show that under some circumstances the above lower bound is larger than the bound provided in \cite[Theorem 1.3.]{Li2010}. Suppose that $\alpha_1^{\#}(G)/2 < \alpha^{\#}_2(G)$. Then there exists a $c > 0$ such that $\frac{2\alpha^{\#}_2(G)}{\alpha_1^{\#}(G)} > 1 +c$. If $\alpha^{\#}_2(G) \leq \min \set{\frac{c}{2(1+d)}, 1/2}$ then by Theorem \ref{thm:polynomial growth}
 \[
  \alpha_2^{\#}(\oplus_H G) \geq \frac{\alpha_2^{\#}(G)}{1+c} > \frac{\alpha_1^{\#}(G)}{2}
 \]
 Unfortunately the values of $\alpha_2^{\#}$ are not so well understood and at the time of writing the only know values for $\alpha_2^{\#}$ are 1, 1/2, 0 and $\frac{1}{2 - 2^{1-k}}$ for $k \in \mathbb{N}$ \cite{AGS06, NP08,Austin2011}. In the non-equivariant case any value for compression can be achieved \cite{ADS09}. It is likely that there exists groups such that $\alpha_2^{\#}$ takes values strictly between 0 and 1/2 in which case our theorem can be applied to provide larger lower bounds than $\alpha_1^{\#}(G) / 2$.
\end{rem}

\begin{proof}
We consider $\bigoplus_H G$ to be the group of functions $\mathbf{f} \colon H \to G$ that have finite support. Let $\mathbf{f} \in \bigoplus_H G$ and let $\supp{\mathbf{f}} = \set{h_1, \ldots, h_n} \subset H$. Set the length of $\mathbf{f}$ as follows
\begin{multline*}
 |\mathbf{f}|_{G \wr H} = \inf_{\sigma \in S_n} \left( d_{H}(1,h_{\sigma(1)}) + \sum_{i = 1}^{n} d_H(h_{\sigma(i)}, h_{\sigma(i+1)}) + d_{H}(h_{\sigma(n)}, 1) \right)\\ + \sum_{h \in H} |\mathbf{f}(h)|_G.
\end{multline*}
This is the induced length metric from $G \wr H$ and so this is a proper length function on $\bigoplus_H G$. Consider the following group
\[
G_i = \set{\mathbf{f} \colon H \to G : \supp{\mathbf{f}} \subset B(1,i)} 
\]
and set $n_i = |B(1,i)|$. Each $G_i$ is finitely generated and the restricted wreath metric to $G_i$ is proper and left invariant so the wreath metric and the word metric are quasi-isometric. In particular 
\[
 |\mathbf{f}|_{G \wr H} - 2i|B(1,i)| \leq \sum_{h \in B(1,i)} |\mathbf{f}(h)|_{G} \leq |\mathbf{f}|_{G \wr H}
\]
for all $\mathbf{f} \in G_i$. By \cite[Proposition 4.1. and Corollary 2.13.]{GK04} it follows that $\alpha_2^{\#}(G_i) = \alpha_2^{\#}(G)$ for all $i \in \mathbb{N}$. 
%
%
Set $0 < \alpha < \alpha_2^{\#}(G)$ and consider a 1-cocyle $b \colon G \to \h$ such that
\[
 \frac{1}{C} |g|_G^{2\alpha} \leq \norm{b(g)}^2 \leq C |g|_G^2.
\]
Enumerate $B(1,i)$ so that $\set{h_1, \ldots, h_{n_i}} = B(1,i)$ and define a 1-cocycle $b_{i} \colon G_i \to \h^{n_i}$, where $b_i(\mathbf{f}) = (b(\mathbf{f}(h_1)),\ldots, b(\mathbf{f}(h_{n_i})))$. If $|\mathbf{f}|_{G \wr H} > 4i|B(1,i)|$, then
\begin{multline*}
 \norm{b_i(\mathbf{f})}_{1/\alpha} = \left( \sum_{j=1}^{i}\norm{b(\mathbf{f}(h_{n_j}))}^{1/\alpha} \right)^{\alpha} \geq \frac{1}{C^{1/\alpha}} \left(\sum_{j=1}^{i}|\mathbf{f}(h_{n_j})|_G \right)^{\alpha} \\  \geq \frac{1}{C^{1/\alpha}}\left(|\mathbf{f}|_{G \wr H} - 2i|B(1,i)|\right)^{\alpha} 
 \geq \frac{1}{2C^{1/\alpha}}|\mathbf{f}|_{G \wr H}^{\alpha}.
\end{multline*}
%
%
If $2 \alpha < 1$ then $\norm{b_i(\mathbf{f})}_2 \geq \norm{b_i(\mathbf{f})}_{1 / \alpha}$ for all $\mathbf{f} \in G_i$ and so it follows that 
\[
\frac{1}{4C^{2/\alpha}} |\mathbf{f}|_{G \wr H}^{2\alpha} - \frac{ i^{2\alpha}}{C}|B(1,i)|^{2\alpha} \leq \norm{b_i(\mathbf{f})}_2^2
\]
for all $\mathbf{f} \in G_i$. Hence $(G_i)_{i \in \mathbb{N}}$ has the $(\alpha,0,2\alpha(1+d))$ polynomial property.

If $2 \alpha \geq 1$ then by H\"{o}lder's inequality $\norm{b_i(\mathbf{f})}_2 \geq n_i^{\frac{1-2\alpha}{2}} \norm{b_i(\mathbf{f})}_{1 / \alpha}$ for all $\mathbf{f} \in G_i$ and so it follows that
\[
\frac{1}{4C^{2/\alpha}|B(1,i)|^{2\alpha - 1}} |\mathbf{f}|_{G \wr H}^{2\alpha} - \frac{ i^{2\alpha}}{C}|B(1,i)|^{2\alpha} \leq \norm{b_i(\mathbf{f})}^2_2.
\]
for all $\mathbf{f} \in G_i$. Hence $(G_i)_{i \in \mathbb{N}}$ has the $(\alpha, d(2\alpha - 1),2\alpha(1+d))$ polynomial property. Thus by Theorem \ref{thm:main} and that $\alpha, d \geq 0$ it follows that
\[
 \alpha^{\#}_2(\bigoplus_H G) \geq \frac{\alpha}{1 + F(\alpha,d) + 2 \alpha(1+d)}
\]
for all $\alpha < \alpha^{\#}_2(G)$ and so the statement of the theorem holds.
\end{proof}

\begin{thm}
 Let $\set{F_i}_{i \in \mathbb{N}}$ be a collection of finite groups such that $F_0 = \set{1}$. Let $G = \bigoplus_{i \in \mathbb{N}} F_i$ be equip with the proper length function $l(g) = \min \set{n \in \mathbb{N} : g \in \oplus_{i=0}^n F_i}$. Then $\alpha^{\#}_2(G) \geq 1/3$.
\end{thm}

\begin{proof}
 Set $G_i = \bigoplus_{j=0}^i F_j$ and observe that $\alpha_2^{\#}(G_i) = 1$ as $G_i$ is finite for all $i \in \mathbb{N}$. Define $f_i \colon G_i \to \mathbb{R}$ to be the 0-map. This is clearly a 1-cocycle and satisfies
 \[  \forall g \in G_i: \ l(g)^2-i^2 \leq |f_i(g)|^2 \leq l(g)^2+i^2.\]
Hence $(G_i)_{i \in \mathbb{N}}$ has the (1,0,2)-polynomial property. Thus $\alpha_2^{\#}(G) \geq 1/3$.
 \end{proof}

%


\begin{exmp} \label{necessity}
 We will use \cite{ADS09} to provide an example of a sequence that does not have $(\alpha,l,q)$-polynomial property for any $\alpha \in (0,1]$ and $l,q > 0$. Let $\Pi_k$, $k \geq 1$ be a sequence of Lafforgue expanders that do not embed into any uniformly convex Banach space \cite{Lafforgue2008}. These are finite factor groups $M_k$ of a lattice $\Gamma$ of $\mathrm{SL}_3(F)$ for a local field $F$.

For every $\alpha \in [0,1]$ there exists a finitely generated group $G$ and a sequence of scaling constants $\lambda_k$ such that $\lambda_k \Pi_k$ has compression $\alpha$ and $G$ is quasi-isometric to $\lambda_k \Pi_k$. Furthermore $G$ contains the free product $\ast_k M_k$ as a subgroup. Let $\alpha = 0$ and let $G$ and the scaling constants $\lambda_k$ be such that $G$ has compression 0. We can equip $\ast_k M_k$ with a proper left invariant metric coming from $G$. Hence we have a sequence
\[
 M_1 \hookrightarrow M_1 \ast M_2 \hookrightarrow \cdots \hookrightarrow \ast_{k=1}^n M_k \hookrightarrow \cdots \hookrightarrow \ast_k M_k
\]
For each $n > 0$, $\ast_{k=1}^n M_k$ has equivariant compression 1/2 \cite[Theorem 1.4.]{Dreesen} however the limit group $\ast_k M_k$ contains a quasi-isometric copy of $\lambda_k \Pi_k$ and so has compression 0. Thus this sequence can not have the $(\alpha,l,q)$-polynomial property for any $\alpha \in (0,1]$ and $l,q > 0$.

\end{exmp}

 \section{The behaviour of compression under free products amalgamated over finite index subgroups \label{gal}}
It is known that the Haagerup property is not preserved under amalgamated free products. Indeed, $(SL_2(\mathbb{Z})\rtimes \mathbb{Z}^2, \mathbb{Z}^2)$ has the relative property $(T)$. So $SL_2(\mathbb{Z})\rtimes \mathbb{Z}^2=(\mathbb{Z}_6\rtimes \mathbb{Z}^2)*_{(\mathbb{Z}_2 \rtimes \mathbb{Z}^2)} (\mathbb{Z}_4\rtimes \mathbb{Z}^2)$ is not Haagerup. In \cite{Gal2004}, S.R. Gal proves the following result.
\begin{thm}
 Let $G_1$ and $G_2$ be finitely generated groups with the Haagerup property that have a common finite index subgroup $H$. For each $i=1,2$, let $\beta_i$ be a proper affine isometric action of $G_i$ on a Hilbert space $V_i(=l^2(\mathbb{Z}))$. Assume that $W<V_1\cap V_2$ is invariant under the actions $(\beta_i)_{\mid H}$ and moreover that both these (restricted) actions coincide on $W$. Then $G_1 \ast_H G_2$ is Haagerup.
\end{thm}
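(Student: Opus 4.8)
The plan is to realize $\Gamma := G_1 *_H G_2$ as a group acting on its Bass--Serre tree $T$ and to build a proper conditionally negative definite function on it by gluing the two vertex-group cocycles along the tree; by Proposition \ref{prop:nsiscondneg} and Theorem \ref{thm:condnegisns} this is equivalent to producing a proper affine isometric action, hence to the Haagerup property. Here $T$ has base vertices $v_1, v_2$ stabilized by $G_1, G_2$ and a base edge stabilized by $H$, and since $[G_i:H]<\infty$ the tree is locally finite and $\Gamma$ acts cocompactly, with two orbits of vertices and one orbit of edges. First I would record the consequences of the hypotheses on the cocycles $b_i = \beta_i(\cdot)(0)$: invariance of $W$ under $\beta_i|_H$ forces $\pi_i(h)W = W$ and $b_i(h)\in W$ for $h\in H$, while coincidence of the two restricted actions on $W$ gives $\pi_1(h)|_W = \pi_2(h)|_W$ and $b_1(h)=b_2(h)$ for all $h\in H$. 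Thus $W$, equipped with its common affine $H$-action, is exactly the overlap data needed to glue $V_1$ and $V_2$ across the edge orbit in a $\Gamma$-equivariant way.

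Next I would assemble a $\Gamma$-equivariant field of affine Hilbert spaces over $T$: to each vertex of type $i$ attach a copy of $(V_i, \beta_i)$, transported around the tree by conjugation under the $\Gamma$-action, and along each edge identify the subspace $W \subseteq V_1$ with $W \subseteq V_2$. The previous paragraph is precisely what makes this identification consistent and $\Gamma$-invariant. The ambient Hilbert space $\mathcal{H}$ is then the space of square-summable sections of the orthogonal-complement subfield $w \mapsto V_{\mathrm{type}(w)} \ominus W$, together with a single shared copy of $W$ threaded through the (connected) tree, and $\Gamma$ acts affinely on sections by combining its permutation of the vertices with the fibrewise actions $\beta_i$. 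Evaluating this affine action at a base section $s_0$ yields a $1$-cocycle $b \colon \Gamma \to \mathcal{H}$ whose restriction to each $G_i$ recovers $\beta_i$, and I would set $\psi = \norm{b}^2$, which is conditionally negative definite by Proposition \ref{prop:nsiscondneg} (equivalently, by Schoenberg's theorem applied to the glued field).

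It then remains to prove that $\psi$ is proper, for which I would argue along the amalgamated normal form and tree geodesics. For $g\in\Gamma$, let $o = w_0, w_1, \dots, w_m = g\cdot o$ be the geodesic in $T$, where $m$ is the translation length and each step crosses into a new vertex group. If $m$ is large then $b(g)$ differs from $s_0$ at many vertices, so $\norm{b(g)}^2$ grows with the tree distance (adding the classical tree-distance conditionally negative definite kernel if a cleaner lower bound is wanted); if instead $m$ stays bounded while the word length $|g|$ grows, then $g$ is, up to bounded movement in $T$, a large element of a single vertex group, and properness of $\psi_i = \norm{b_i}^2$ (coming from properness of $\beta_i$) forces $\psi(g)\to\infty$. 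Combining the two regimes yields properness of $\psi$.

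I expect the main obstacle to be the rigorous construction of the glued field and the verification that the assembled map $b$ is a genuine $1$-cocycle for a single unitary representation of all of $\Gamma$. The subtlety is that each $G_i$ moves its copy of $W$ out of $W$, so the $W$-direction cannot be treated vertex-by-vertex and must be carried as one shared summand running through the whole tree; checking $\Gamma$-equivariance, isometry, and the cocycle identity across the amalgamation is exactly where the invariance-and-coincidence hypothesis on $W$ is indispensable. Once this bookkeeping is in place, the properness estimate is comparatively routine.
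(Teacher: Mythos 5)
Your construction is, in substance, the same as the one the paper uses (following Gal) in its proof of the quantitative version, Theorem \ref{thm:amalgamated}: your ``field of affine Hilbert spaces over the Bass--Serre tree glued along a single shared copy of $W$'' is precisely the geometric reading of the iterated-induction construction there, in which inducing an $H$-action on $\mathcal{H}_\omega$ up to $G_i$ attaches copies of $\mathcal{H}_\omega$ to the edges at a vertex, and the resulting space $W_\Gamma = W \oplus \mathcal{H}_1^{\bullet}\oplus\mathcal{H}_2^{\bullet} = V_1 \oplus \mathcal{H}_1^{\circ}\oplus\mathcal{H}_2^{\bullet} = V_2\oplus\mathcal{H}_2^{\circ}\oplus\mathcal{H}_1^{\bullet}$ plays the role of your space of sections; your observation that the restriction of the glued cocycle to $G_i$ recovers $\beta_i$, and your two-regime properness argument (large tree displacement versus bounded displacement with a large vertex-group factor), match the paper's case distinction on $\psi_T(\gamma)$. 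Your second regime is also correct for the same reason as in the paper: finite index lets one write $\gamma = g\, t^{i_1}_{j_1}\cdots t^{i_k}_{j_k}$ with $g$ in a single vertex group and the $t^i_j$ among finitely many coset representatives, so bounded $k$ and large $|\gamma|$ force $|g|$ to be large and properness of $\beta_i$ takes over.

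The one genuine flaw is in your first regime, and it is not merely cosmetic: it is \emph{false} in general that ``$b(g)$ differs from $s_0$ at many vertices, so $\|b(g)\|^2$ grows with the tree distance,'' so the tree kernel you relegate to a parenthetical (``if a cleaner lower bound is wanted'') is in fact indispensable. Concretely, take $G_1 = G_2 = H \times \mathbb{Z}/2\mathbb{Z}$ with $H$ Haagerup, let $\beta_i$ be a proper action of $G_i$ that factors through the projection onto $H$, and take $W = V_1 = V_2$ with the common $H$-action; all hypotheses of the theorem hold, the complements $V_i \ominus W$ vanish, and the glued action is just the $H$-action on $W$. Then the involutions $\sigma_i = (1,\bar{1}) \in G_i$ act trivially, so the glued cocycle vanishes identically on the infinite cyclic subgroup generated by $\sigma_1\sigma_2$, whose powers translate the base vertex arbitrarily far along the tree: the glued cocycle alone is not proper, even though every fibre action is. This is exactly why the paper does not treat $\psi_T$ as optional but defines $\psi = \psi_\Gamma + \psi_T$, where $\psi_T$ is the syllable-length function, conditionally negative definite by applying Harpe--Valette to the Bass--Serre tree, and only then runs the two cases. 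Your proof becomes complete once you promote the parenthetical to a mandatory ingredient and delete the unsupported growth claim it was patching.
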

Under the same conditions as above, we want to give estimates on $\alpha_2^\#(G_1*_HG_2)$ in terms of the equivariant Hilbert space compressions of $G_1,G_2$ (see Theorem \ref{thm:amalgamated} below). Note that the following lemma shows that $\alpha_2^\#(G_1)=\alpha_2^\#(H)=\alpha_2^\#(G_2)$ when $H$ is of finite index in both $G_1$ and $G_2$. We are indebted to Alain Valette for this lemma and its proof. The notation $\alpha_p^\#$ refers to the equivariant {\bf $L_p$-compression} for some $p\geq 1$. It is defined in exactly the same way as $\alpha_2^\#$ except that one considers affine isometric actions on $L_p$-spaces instead of $L_2$-spaces.

\begin{lem} Let $G$ be a compactly generated, locally compact group, and let $H$ be an open, finite-index subgroup of $G$. Then $\alpha_p^\#(H)=\alpha_p^\#(G)$.
\end{lem}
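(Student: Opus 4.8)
The plan is to prove the two inequalities separately, using throughout that, since $H$ has finite index in $G$, the inclusion $(H,|\cdot|_H) \hookrightarrow (G,|\cdot|_G)$ is a quasi-isometry; in particular the intrinsic word metric of $H$ and the restriction of the word metric of $G$ to $H$ are bi-Lipschitz equivalent up to an additive constant, so the compression exponent $R(b)$ of any $H$-cocycle $b$ is unchanged whether it is measured in $|\cdot|_H$ or in $|\cdot|_G$. Granting this, the inequality $\alpha_p^\#(H) \geq \alpha_p^\#(G)$ is the easy one: the restriction to $H$ of any proper affine isometric $G$-action is a proper affine isometric $H$-action whose cocycle takes the same values on $H$, so $R(b|_H) \geq R(b)$, and taking suprema gives the claim (this is the general subgroup inequality). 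The content of the lemma is therefore the reverse inequality $\alpha_p^\#(G) \geq \alpha_p^\#(H)$, which I would establish by \emph{inducing} affine isometric actions from $H$ to $G$.

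First I would fix $n = [G:H]$ and coset representatives $e = g_1, g_2, \ldots, g_n$ with $G = \bigsqcup_i g_i H$, and put $R = \max_i |g_i|_G$. Each $g \in G$ then determines a permutation $\sigma_g$ of $\{1,\ldots,n\}$ via $g g_i H = g_{\sigma_g(i)} H$, together with elements $h_i(g) := g_{\sigma_g(i)}^{-1} g g_i \in H$; one checks that $\sigma_{gg'} = \sigma_g \circ \sigma_{g'}$ and $h_i(gg') = h_{\sigma_{g'}(i)}(g)\, h_i(g')$. Given a proper affine isometric action $\beta$ of $H$ on an $L_p$-space $V$ with $1$-cocycle $b$ and $R(b)$ close to $\alpha_p^\#(H)$, I would define an action $\tilde\beta$ of $G$ on $\tilde V := \bigoplus_{i=1}^n V$ (the $\ell^p$-direct sum, which is again an $L_p$-space) by permuting the coordinates according to $\sigma_g$ and applying the affine isometries $\beta(h_i(g))$ coordinate-wise. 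This is the affine analogue of the induced representation; its cocycle is $\tilde b(g) = (b(h_i(g)))_i$ up to reindexing, so that $\norm{\tilde b(g)}^p = \sum_{i=1}^n \norm{b(h_i(g))}^p$.

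The metric estimates are then driven by the single observation that, since $g = g_{\sigma_g(i)}\, h_i(g)\, g_i^{-1}$, the triangle inequality in $(G,|\cdot|_G)$ yields
\[ |g|_G - 2R \leq |h_i(g)|_G \leq |g|_G + 2R \qquad (1 \leq i \leq n). \]
Feeding the upper bound into the large-scale Lipschitz bound for $b$ shows that $\tilde b$ is large-scale Lipschitz in $|g|_G$; feeding the lower bound into the compression estimate $\frac{1}{C}|h|^r - D \leq \norm{b(h)}$ (valid for any $r < R(b)$, and measurable in either metric on $H$ by the bi-Lipschitz remark) gives, for $|g|_G$ large, $\norm{\tilde b(g)} \geq \norm{b(h_1(g))} \gtrsim |g|_G^r$. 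This one inequality simultaneously shows that $\tilde\beta$ is proper (hence an admissible competitor for $\alpha_p^\#(G)$) and that $R(\tilde b) \geq r$. Letting $r \to R(b)$ and then taking the supremum over $\beta$ yields $\alpha_p^\#(G) \geq \alpha_p^\#(H)$, and combining with the easy direction gives equality.

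I expect the main obstacle to be formal rather than metric: one must verify carefully that $\tilde\beta$ really is an affine isometric action of $G$, i.e. that the permutation-twisted coordinate formula respects composition, which is exactly where the cocycle-type identities $\sigma_{gg'} = \sigma_g\sigma_{g'}$ and $h_i(gg') = h_{\sigma_{g'}(i)}(g)\,h_i(g')$ are needed, and that $\tilde V$ is genuinely an $L_p$-space (a finite $\ell^p$-sum of $L_p$-spaces being again an $L_p$-space). Once the induced action is set up correctly, the finiteness of the index makes the word-length comparison above immediate and the remaining compression bookkeeping is routine.
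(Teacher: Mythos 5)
Your proposal is correct and takes essentially the same route as the paper: the easy inequality by restricting a proper $G$-action to $H$, and the reverse inequality by inducing an affine isometric $H$-action up to $G$ and bounding the induced cocycle below by a single coordinate (coset). Your coordinate model $\bigoplus_{i=1}^n V$ with the permutation-twisted action is exactly the paper's space $E$ of equivariant functions $G \to L^p$ (your basepoint $0$ corresponds to the paper's $\xi_0$), so the two arguments agree up to notation.
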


\begin{proof} As $H$ is embedded $H$-equivariantly, quasi-isometrically in $G$, we have $\alpha_p^\#(H)\geq \alpha_p^\#(G)$. To prove the converse inequality, we may assume that $\alpha_p^\#(H)>0$. Let $S$ be a compact generating subset of $H$. Let $A(h)v=\pi(h)v + b(h)$ be an affine isometric action of $H$ on $L^p$, such that for some $\alpha<\alpha_p^\#(H)$ we have $\|b(h)\|_p \geq C|h|_S^\alpha$, for every $h\in H$. Now we induce up the action $A$ from $H$ to $G$, as on p.91 of \cite{BHV}\footnote{We seize this opportunity to correct a misprint in the definition of the vector $\xi_0$ in that construction in p.91 of \cite{BHV}.}. The affine space of the induced action is
\begin{equation*}
E \mathrel{\mathop:}= \{f:G\rightarrow L^p: f(gh)=A(h)^{-1}f(g),\ \forall h \in H \mbox{ and almost every $g\in G$}\}, 
\end{equation*}
with distance given by $\|f_1-f_2\|_p^p=\sum_{x\in G/H}\|f_1(x)-f_2(x)\|_p^p$. The induced affine isometric action $\tilde{A}$ of $G$ on $E$ is then given by $(\tilde{A}(g))f(g')=f(g^{-1}g')$, for $f\in E,\,g,g'\in G$.

A function $\xi_0\in E$ is then defined as follows. Let $s_1=e, s_2,,...,s_n$ be a set of representatives for the left cosets of $H$ in $G$. Set $\xi_0(s_ih)=b(h^{-1})$, for $h\in H,\,i=1,...,n$. Define the 1-cocycle $\tilde{b}$ on $G$ by $\tilde{b}(g)=\tilde{A}(g)\xi_0-\xi_0$, for $g\in G$. For an $h\in H$, we then have:
\begin{equation*}
\|\tilde{b}(h)\|_p^p=\sum_{i=1}^n\|\xi_0(h^{-1}s_i)-\xi_0(s_i)\|_p^p=\sum_{i=1}^n\|\xi_0(h^{-1}s_i)\|_p^p\geq \|\xi_0(h^{-1})\|_p^p=\|b(h)\|_p^p. 
\end{equation*}
Set $K=\max_{1\leq i\leq n}\|\tilde{b}(s_i)\|_p$. Take $T=S\cup\{s_1,...,s_n\}$ as a compact generating set of $G$. For $g\in G$, write $g=s_ih$ for $1\leq i\leq n,\,h\in H$. Then
\begin{multline*}
\|\tilde{b}(g)\|_p\geq\|\tilde{b}(h)\|_p-K\geq \|b(h)\|_p-K\geq C|h|_S^\alpha-K\geq C|h|_T^\alpha-K \\
\geq C(|g|_T-1)^\alpha - K\geq C'|g|_T^\alpha - K'.
\end{multline*}
So the compression of the 1-cocycle $\tilde{b}$ is at least $\alpha$, hence $\alpha_p^\#(G)\geq\alpha_p^\#(H)$.
\end{proof}
The following proof uses a construction by S.R. Gal, see page 4 of \cite{Gal2004}.
\begin{thm}
 Let $V_1$ and $V_2$ be closed subspaces of a Hilbert space. Suppose $H$ is a finite index subgroup of $G_1$ and $G_2$ and suppose there are proper affine isometric actions $\beta_i$ (with compression $\alpha_i$) of each $G_i$ on  $V_i$. Assume that $W<V_1\cap V_2$ is invariant under the actions $(\beta_i|_H)$ and moreover that both these (restricted) actions coincide on $W$. Then $\alpha_2^{\#}(G_1 \ast_H G_2) \geq \frac{\min(\alpha_1,\alpha_2)}{2}$. In particular, $\alpha_2^\#(G_1*_HG_2)\geq \frac{\alpha_2^\#(H)}{2}$. \label{thm:amalgamated}
 \end{thm}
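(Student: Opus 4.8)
The plan is to realize $G := G_1 \ast_H G_2$ as a group acting on its Bass--Serre tree $T$ and to assemble, following Gal's construction on page~4 of \cite{Gal2004}, a single proper affine isometric action of $G$ out of the two given actions $\beta_1,\beta_2$. Recall that the vertices of $T$ are the cosets $gG_1$ and $gG_2$ and its edges the cosets $gH$; since $H$ has finite index in each $G_i$, the tree is locally finite and $G$ acts on it cocompactly. Every $g\in G$ has a reduced normal form $g=g_1g_2\cdots g_n$ with $g_j\in G_{i_j}\setminus H$ and $i_j\neq i_{j+1}$, and the number $n$ of syllables is comparable to the combinatorial distance $d_T(v_0,gv_0)$ from the base vertex $v_0=eG_1$.

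First I would carry out Gal's gluing. The hypothesis that $W<V_1\cap V_2$ is $(\beta_i)_{\mid H}$-invariant and that the two restricted $H$-actions agree on $W$ is precisely what allows one to amalgamate $\beta_1$ and $\beta_2$ into an affine isometric action $\beta=\pi+b$ of $G$ on a Hilbert space $V$ built over $T$ (one copy of $W$ shared along every edge, and the complementary parts $V_i\ominus W$ attached at the vertices of type $i$). Writing $b_i(g)=\beta_i(g)(0)$, the construction is arranged so that the associated $1$-cocycle $b$ of $G$ reads off the factor cocycles syllable by syllable along the geodesic $[v_0,gv_0]$ in $T$, yielding the crucial lower bound
\[
 \norm{b(g)}^2 \gtrsim \sum_{j=1}^{n} \norm{b_{i_j}(g_j)}^2 .
\]
Establishing this estimate — together with the facts that $\beta$ is genuinely isometric and that the shared copy of $W$ produces no cancellation between consecutive syllables — is the technical heart of the argument, and is exactly where the compatibility on $W$ is indispensable.

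Granting this, the compression bound is elementary. Since $\beta_i$ has compression $\alpha_i$ and $l$ is a word metric (so the additive constant may be absorbed), for every $r<\alpha:=\min(\alpha_1,\alpha_2)$ there is $C>0$ with $\norm{b_{i_j}(g_j)}\geq \tfrac1C|g_j|^{r}$ for all reduced syllables, and each $|g_j|\geq 1$. The inequality $\sum_j x_j^{2r}\geq(\sum_j x_j)^{r}$ for reals $x_j\geq 1$ and $r\in(0,1]$ (the subadditivity of $t\mapsto t^{2r}$ when $2r\leq1$, and a Jensen estimate $\sum_j x_j^{2r}\geq n^{1-2r}(\sum_j x_j)^{2r}$ together with $\sum_j x_j\geq n$ when $2r>1$), combined with the triangle inequality $\sum_j|g_j|\geq|g|$, gives
\[
 \norm{b(g)}^2 \gtrsim \sum_{j} |g_j|^{2r} \geq \Big(\sum_{j}|g_j|\Big)^{r} \geq |g|^{r}.
\]
Hence $\norm{b(g)}\gtrsim |g|^{r/2}$, so $R(b)\geq r/2$, and letting $r\uparrow\alpha$ yields $\alpha_2^{\#}(G)\geq\alpha/2=\min(\alpha_1,\alpha_2)/2$. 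The matching large-scale Lipschitz upper bound is automatic: any $1$-cocycle over a finitely generated group satisfies $\norm{b(gh)}\leq\norm{b(g)}+\norm{b(h)}$, hence $\norm{b(g)}\leq C|g|$ for the word metric, and the displayed lower bound also shows $b$ is proper. The final assertion $\alpha_2^{\#}(G)\geq\alpha_2^{\#}(H)/2$ then follows by choosing the $\beta_i$ with compressions approaching $\alpha_2^{\#}(G_i)$ and invoking the preceding lemma, which gives $\alpha_2^{\#}(G_1)=\alpha_2^{\#}(H)=\alpha_2^{\#}(G_2)$.

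I expect the main obstacle to be the construction-side estimate $\norm{b(g)}^2\gtrsim\sum_j\norm{b_{i_j}(g_j)}^2$: one must verify that Gal's amalgamated action is well defined and isometric, and that, because consecutive syllables lie in distinct factors and meet only along the shared subspace $W$, their cocycle contributions accumulate in squared norm along the tree geodesic without interference. By contrast, once this is in hand the passage to the compression exponent is a short computation, and the factor $1/2$ is seen to arise solely from passing from $\norm{b(g)}^2$ back to $\norm{b(g)}$.
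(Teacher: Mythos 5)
Your overall architecture (glue the two actions via Gal's construction, then compute compression) matches the paper's, but the inequality you yourself flag as the technical heart,
\[
\|b(\gamma)\|^2 \ \gtrsim\ \sum_{j=1}^{n}\|b_{i_j}(g_j)\|^2 \qquad (\gamma=g_1\cdots g_n \text{ reduced}),
\]
is false, and no arrangement of Gal's construction can make it true: the shared subspace $W$ forces interference between consecutive syllables. Concretely, take $G_1=G_2=\mathbb{Z}$, $H=2\mathbb{Z}$, and let both $\beta_i$ be the proper, compression-$1$ action of $\mathbb{Z}$ on $V_1=V_2=W=\mathbb{R}$ by translation; the compatibility hypothesis holds with $W=V_1=V_2$. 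Then every quotient $\mathcal{H}_\omega$ vanishes, the glued space is $W_\Gamma=\mathbb{R}$, and the glued action of $\Gamma=\langle a,b\mid a^2=b^2\rangle$ is translation by the image of the homomorphism $\Gamma\to\mathbb{Z}$ sending both $a$ and $b$ to $1$. Hence the glued cocycle vanishes on the infinite-order element $ab^{-1}$ and on all its powers $(ab^{-1})^n$, while the right-hand side of your inequality equals $2n$; so Gal's action alone is not even proper, and your subsequent computation has nothing to stand on. Adding the tree cocycle does not rescue a syllable-wise bound either: with syllables $a^{2k+1}$, $b^{-(2k+1)}$ the left-hand side grows like $n$ while your right-hand side grows like $nk^2$. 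A structural warning sign is that your estimate never uses the finite-index hypothesis, which is exactly what the paper's argument leans on.

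The paper's proof avoids any cross-syllable orthogonality claim and uses only two facts: (i) the cocycle of the glued action restricts on each $G_i$ to the original $b_i$, and (ii) the Bass--Serre tree gives a conditionally negative definite syllable-length function $\psi_T$, so one may work with $\psi=\psi_\Gamma+\psi_T$, where $\psi_\Gamma=\|b(\cdot)\|^2$. Then comes a two-case analysis. If $\psi_T(\gamma)\geq |\gamma|^{\alpha-\varepsilon}/M$, the tree term alone gives $\psi(\gamma)\gtrsim|\gamma|^{\alpha-\varepsilon}$. Otherwise, finite index is used to write $\gamma=g\,t^{i_1}_{j_1}\cdots t^{i_k}_{j_k}$ with $g$ in a single factor and the $t$'s among finitely many coset representatives of word length at most $M$, so that $|g|\geq|\gamma|-\psi_T(\gamma)M$ is close to $|\gamma|$; then fact (i) and the compression of $\beta_i$ give $\psi(\gamma)\geq\psi_\Gamma(\gamma)\gtrsim|\gamma|^{2\alpha-\varepsilon}$. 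The binding case is the first one, and that is where the factor $\tfrac12$ truly comes from ($\psi$ is a squared cocycle norm while $\psi_T$ is only linear in the syllable number) --- not, as in your write-up, from square-rooting a syllable sum. Those two ingredients, the auxiliary tree cocycle and the finite-index normal form, are the ideas missing from your proposal.
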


\begin{proof}
 
Following \cite{Gal2004}, let us build a Hilbert space $W_{\Gamma}$ on which $\Gamma = G_1 \ast_H G_2$ acts affinely and isometrically. Let $\omega$ be a finite alternating sequence of 1's and 2's and suppose $\pi$ is a linear action of $H$ on some Hilbert space denoted $\mathcal{H}_{\omega}$. One can induce up the linear action from $H$ to $G_i$, obtaining a Hilbert space
\begin{equation*}
V \mathrel{\mathop:}= \set{f \colon G_i \to \mathcal{H}_\omega \ | \ \forall h\in H, \ f(gh) = \pi(h^{-1})f(g) }\\
\end{equation*}
and an orthogonal action $\pi_i \colon G_i \to \mathcal{O}(V)$ defined by $\pi_i(g) f(g') = f(g^{-1}g')$. The subspace 
\begin{equation*}
\set{f \colon G_i \to \mathcal{H}_\omega \ | \ \forall h\in H,  \ f(h) = \pi(h^{-1})f(1), f_{\mid G_i\backslash H}=0 } 
\end{equation*}
can be identified with $\mathcal{H}_{\omega}$ by letting an element $f$ correspond to $f(1)$. It is clear that the action $\pi_i$ restricted to $H$ coincides with the original linear action $\pi$ via this identification.

So, starting from any linear $H$-action on a Hilbert space $\mathcal{H}_{\omega}$, we can obtain a linear action of say $G_1$ on a Hilbert space that can be written as $\mathcal{H}_\omega \oplus \mathcal{H}_{1\omega}$ for some $\mathcal{H}_{1\omega}$. We can restrict this action to a linear $H$-action on $\mathcal{H}_{1\omega}$ and we can lift this to an action of $G_2$ on a space $\mathcal{H}_{1\omega}\oplus \mathcal{H}_{21\omega}$ and so on, repeating the process indefinitely. Here, we will execute this infinite process twice.

The first linear $H$-action on which we apply the process is obtained as follows. As $\beta_i(H)(W)=W$ for each $i=1,2$, the restriction to $H$ of $\beta_1$, gives naturally a linear $H$-action on $\mathcal{H}_1:=V_1/W$. The second linear $H$-action is obtained by similarly noting that the restriction to $H$ of $\beta_2$ gives a linear $H$-action on $\mathcal{H}_2:=V_2/W$. We then apply the above process indefinitely.
\begin{equation*}
 \mathcal{H}_{1}^{\bullet} \mathrel{\mathop:}=  \rlap{$\overbrace{\phantom{\mathcal{H}_1 \oplus \mathcal{H}_{21}}}^{G_2 \curvearrowright}$} \mathcal{H}_1 \oplus \rlap{$\underbrace{\phantom{\mathcal{H}_{21} \oplus \mathcal{H}_{121}}}_{G_1 \curvearrowright}$} \mathcal{H}_{21} \oplus \rlap{$\overbrace{\phantom{\mathcal{H}_{121} \oplus \mathcal{H}_{2121}}}^{G_2 \curvearrowright}$} \mathcal{H}_{121} \oplus \underbrace{ \mathcal{H}_{2121} \oplus \cdots} , \quad  
 \mathcal{H}_{2}^{\bullet} \mathrel{\mathop:}=  \rlap{$\overbrace{\phantom{\mathcal{H}_2 \oplus \mathcal{H}_{12}}}^{G_1 \curvearrowright}$} \mathcal{H}_2 \oplus \rlap{$\underbrace{\phantom{\mathcal{H}_{12} \oplus \mathcal{H}_{212}}}_{G_2 \curvearrowright}$} \mathcal{H}_{12} \oplus \rlap{$\overbrace{\phantom{\mathcal{H}_{212} \oplus \mathcal{H}_{1212}}}^{G_1 \curvearrowright}$} \mathcal{H}_{212} \oplus \underbrace{ \mathcal{H}_{1212} \oplus \cdots},
\end{equation*}
where for $\omega$ a sequence of alternating 1's and 2's, $G_i$ acts on $\mathcal{H}_{\omega} \oplus \mathcal{H}_{i \omega}$. Note that there are two $H$-actions on $\mathcal{H}_{1}^{\bullet}$ as $H$ acts on the first term $\mathcal{H}_1$. One can verify that both $H$-actions coincide (this fact is also mentioned in \cite{Gal2004},page 4). The same is true for $\mathcal{H}_{2}^{\bullet}$. 

Denote $\mathcal{H}_1^{\circ}=\mathcal{H}_{1}^{\bullet} \ominus \mathcal{H}_1$ and similarly, set $\mathcal{H}_2^{\circ}=\mathcal{H}_{2}^{\bullet} \ominus \mathcal{H}_2$. We denote
\[ W_{\Gamma} = W \oplus \mathcal{H}_1^{\bullet} \oplus \mathcal{H}_2^{\bullet}= V_1 \oplus \mathcal{H}_1^{\circ} \oplus \mathcal{H}_{2}^{\bullet}= V_2 \oplus \mathcal{H}_{2}^{\circ} \oplus \mathcal{H}_{1}^{\bullet}.\]
The above formula, which writes $W$ as a direct sum in three distinct ways, shows that both $G_1$ and $G_2$ act on $W_{\Gamma}$. As mentioned before, the actions coincide on $H$ and so we obtain an affine isometric action of $\Gamma$ on $W_{\Gamma}$. Note that the corresponding $1$-cocycle, when restricted to $G_1$ (or $G_2$), coincides with the $1$-cocycle of $\beta_1$(or $\beta_2$).

We inductively define a length function $\psi_T \colon \Gamma \to \mathbb{N}$ by $\psi_T(h) = 0$ for all $h \in H$ and $\psi_T(\gamma) = \min \set{\psi_T(\eta) + 1 \ | \ \gamma = \eta g, \ \mathrm{where } \ g \in G_1 \cup G_2}$. By applying Proposition 2 in \cite{Harpe1989} to the Bass--Serre tree of $G_1*_HG_2$, we see that this map is conditionally negative definite and thus the normed square of a $1$-cocycle associated to an affine isometric action of $\Gamma$ on a Hilbert space.

Let $\psi_{\Gamma}$ be the conditionally negative definite function associated to the action of $\Gamma$ on $W_{\Gamma}$. We now find the compression of the conditionally negative definite map $\psi = \psi_{\Gamma} + \psi_T$. First set
\[ M = \max \set{|t_j^i|_{G_i} : i=1,2 \ \mathrm{and} \ 1 \leq j \leq [G_i : H]},\]
where $t_j^i$ are right coset representatives of $H$ in $G_i$ such that $t^i_1 = 1_{G_i}$ for $i =1,2$.

Denote $\alpha=\min(\alpha_1,\alpha_2)$ and fix some $\varepsilon>0$ arbitrarily small.
 Let $\gamma \in \Gamma$ and suppose in normal form $\gamma = g t^{i_1}_{j_1} \cdots t^{i_k}_{j_k}$, where $g \in G_i$ for some $i = 1,2$. Assume first that $\psi_T(\gamma) \geq \frac{|\gamma|^{\alpha - \varepsilon}}{M}$. In that case, $\psi(\gamma) \geq \frac{|\gamma|^{\alpha - \varepsilon}}{M}$. Else, we have that $\psi_T(\gamma) < \frac{|\gamma|^{\alpha - \varepsilon}}{M}$ and so for all $\gamma \in \Gamma$ such that $|\gamma|$ is sufficiently large, we have
\begin{align*}
 \psi(\gamma) & \geq  \psi_{\Gamma}(\gamma)  = \norm{\gamma \cdot 0}^2\\
  & \geq  (\norm{g \cdot 0} - \psi_T(\gamma)M)^2 \\
 & \gtrsim ((|\gamma| - \psi_T(\gamma)M)^{\alpha - \varepsilon/2} - \psi_T(\gamma)M)^2\\
 & \geq  ((|\gamma| - |\gamma|^{\alpha - \varepsilon})^{\alpha - \varepsilon/2} - |\gamma|^{\alpha- \varepsilon})^2 \\
 & \gtrsim |\gamma|^{2\alpha- \varepsilon},
\end{align*}
where $\gtrsim$ represents inequality up to a multiplicative constant; we use here that one can always assume, without loss of generality, that the $1$-cocycles associated to $\beta_1$ and $\beta_2$ satisfy $\|b_i(g_i)\|\gtrsim |g_i|^{\alpha-\varepsilon}$ (see Lemma 3.4 in \cite{AntolinDreesen}).

So now, by the first case, $\psi(\gamma) \geq |\gamma|^{\alpha- \varepsilon}$ for all $\gamma \in \Gamma$ that are sufficiently large. Hence, we obtain the lower bound $\alpha^{\#}_2(\Gamma) \geq \alpha^{\#}_2(H) / 2$.
\end{proof}

 \section*{Acknowledgement}
The authors would like to thank Jacek Brodzki for interesting discussions and the referee for their helpful recommendations and for suggesting a simplier example in Example \ref{necessity}. The first author thanks Martin Finn-Sell for interesting conversations on affine subspaces. The second author thanks Micha\l{}  Marcinkowski for interesting discussions related to Gal's paper \cite{Gal2004}.
\bibliographystyle{plain}
\bibliography{/home/chris/Documents/git_repos/texfiles/papers/Bibliography2.bib}

\begin{thebibliography}{10}

\bibitem{AntolinDreesen}
Y.~Antolin and D.~Dreesen.
\newblock The {H}aagerup property is stable under graph products.
\newblock preprint, 2013.

\bibitem{AGS06}
G.~N. Arzhantseva, V.~S. Guba, and M.~V. Sapir.
\newblock Metrics on diagram groups and uniform embeddings in a {H}ilbert
  space.
\newblock {\em Comment. Math. Helv.}, 81(4):911--929, 2006.

\bibitem{ADS09}
Goulnara Arzhantseva, Cornelia Dru{\c{t}}u, and Mark Sapir.
\newblock Compression functions of uniform embeddings of groups into {H}ilbert
  and {B}anach spaces.
\newblock {\em J. Reine Angew. Math.}, 633:213--235, 2009.

\bibitem{Austin2011}
Tim Austin.
\newblock Amenable groups with very poor compression into {L}ebesgue spaces.
\newblock {\em Duke Math. J.}, 159(2):187--222, 2011.

\bibitem{ANP}
Tim Austin, Assaf Naor, and Yuval Peres.
\newblock The wreath product of {$\mathbb{Z}$} with {$\mathbb{Z}$} has
  {H}ilbert compression exponent {$\frac{2}{3}$}.
\newblock {\em Proc. Amer. Math. Soc.}, 137(1):85--90, 2009.

\bibitem{BHV}
Bachir Bekka, Pierre de~la Harpe, and Alain Valette.
\newblock {\em Kazhdan's property ({T})}, volume~11 of {\em New Mathematical
  Monographs}.
\newblock Cambridge University Press, Cambridge, 2008.

\bibitem{CCJJV01}
Pierre-Alain Cherix, Michael Cowling, Paul Jolissaint, Pierre Julg, and Alain
  Valette.
\newblock {\em Groups with the {H}aagerup property}, volume 197 of {\em
  Progress in Mathematics}.
\newblock Birkh\"auser Verlag, Basel, 2001.
\newblock Gromov's a-T-menability.

\bibitem{CSV12}
Yves Cornulier, Yves Stalder, and Alain Valette.
\newblock Proper actions of wreath products and generalizations.
\newblock {\em Trans. Amer. Math. Soc.}, 364(6):3159--3184, 2012.

\bibitem{dCTV07}
Yves de~Cornulier, Romain Tessera, and Alain Valette.
\newblock Isometric group actions on {H}ilbert spaces: growth of cocycles.
\newblock {\em Geom. Funct. Anal.}, 17(3):770--792, 2007.

\bibitem{Harpe1989}
Pierre de~la Harpe and Alain Valette.
\newblock {\em La propri\'et\'e {$(T)$} de {K}azhdan pour les groupes
  localement compacts (avec un appendice de {M}arc {B}urger)}.
\newblock Number 175. 1989.
\newblock With an appendix by M. Burger.

\bibitem{Dreesen}
Dennis Dreesen.
\newblock Hilbert space compression for free products and {HNN}-extensions.
\newblock {\em J. Funct. Anal.}, 261(12):3585--3611, 2011.

\bibitem{Gal2004}
{\'S}wiatos{\l}aw~R. Gal.
\newblock a-{T}-menability of groups acting on trees.
\newblock {\em Bull. Austral. Math. Soc.}, 69(2):297--303, 2004.

\bibitem{GK02}
E.~Guentner and J.~Kaminker.
\newblock Exactness and the {N}ovikov conjecture.
\newblock {\em Topology}, 41(2):411 -- 418, 2002.

\bibitem{GK04}
Erik Guentner and Jerome Kaminker.
\newblock Exactness and uniform embeddability of discrete groups.
\newblock {\em J. London Math. Soc. (2)}, 70(3):703--718, 2004.

\bibitem{Hewitt}
Edwin Hewitt and Kenneth~A. Ross.
\newblock {\em Abstract harmonic analysis. {V}ol. {II}: {S}tructure and
  analysis for compact groups. {A}nalysis on locally compact {A}belian groups}.
\newblock Die Grundlehren der mathematischen Wissenschaften, Band 152.
  Springer-Verlag, New York, 1970.

\bibitem{Lafforgue2008}
Vincent Lafforgue.
\newblock Un renforcement de la propri\'et\'e ({T}).
\newblock {\em Duke Math. J.}, 143(3):559--602, 2008.

\bibitem{Li2010}
Sean Li.
\newblock Compression bounds for wreath products.
\newblock {\em Proc. Amer. Math. Soc.}, 138(8):2701--2714, 2010.

\bibitem{NP08}
A.~Naor and Y.~Peres.
\newblock Embeddings of discrete groups and the speed of random walks.
\newblock {\em Int. Math. Res. Not.}, 2008.

\bibitem{Stalder-Valette}
Yves Stalder and Alain Valette.
\newblock Wreath products with the integers, proper actions and {H}ilbert space
  compression.
\newblock {\em Geom. Dedicata}, 124:199--211, 2007.

\bibitem{T11}
Romain Tessera.
\newblock Asymptotic isoperimetry on groups and uniform embeddings into
  {B}anach spaces.
\newblock {\em Comment. Math. Helv.}, 86(3):499--535, 2011.

\end{thebibliography}
\end{document}